\newtheorem{corollary}{Corollary}[section]
\newtheorem{lemma}{Lemma}[section]
\newtheorem{proposition}{Proposition}[section]
\newtheorem{theorem}{Theorem}[section]
\newcommand{\bes}{\begin{displaymath}}
\newcommand{\ees}{\end{displaymath}}
\newcommand{\be}{\begin{equation}}
\newcommand{\ee}{\end{equation}}
\newcommand{\ba}{\begin{eqnarray}}
\newcommand{\ea}{\end{eqnarray}}
\newcommand{\bas}{\begin{eqnarray*}}
\newcommand{\eas}{\end{eqnarray*}}
\newcommand{\@Bbb}[1]{\ensuremath{\Bbb #1}}
\newcommand{\B}{{\@Bbb B}}
\newcommand{\C}{{\@Bbb C}}
\newcommand{\E}{{\@Bbb E}}
\newcommand{\F}{{\@Bbb F}}
\newcommand{\G}{{\@Bbb G}}
\renewcommand{\P}{{\@Bbb P}}
\newcommand{\Q}{{\@Bbb Q}}
\newcommand{\bQ}{{\@Bbb Q}}
\newcommand{\N}{{\@Bbb N}}
\newcommand{\R}{{\@Bbb R}}
\newcommand{\T}{{\@Bbb T}}
\newcommand{\bbR}{{\@Bbb R}}
\newcommand{\W}{{\@Bbb W}}
\newcommand{\Z}{{\@Bbb Z}}
\newcommand{\bbZ}{{\@Bbb Z}}
\newcommand{\@s}[1]{\ensuremath{\mathcal #1}}
\newcommand{\cA}{\@s A}
\newcommand{\cB}{\@s B}
\newcommand{\cC}{\@s C}
\newcommand{\cD}{\@s D}
\newcommand{\cE}{\@s E}
\newcommand{\cF}{\@s F}
\newcommand{\cG}{\@s G}
\newcommand{\cH}{\@s H}
\newcommand{\cI}{\@s I}
\newcommand{\cJ}{\@s J}
\newcommand{\cal}{\mathcal}
\newcommand{\cK}{\@s K}
\newcommand{\cL}{\@s L}
\newcommand{\cN}{\@s N}
\newcommand{\cM}{\@s M}
\newcommand{\cO}{\@s O}
\newcommand{\cP}{\@s P}
\newcommand{\cQ}{\@s Q}
\newcommand{\cR}{\@s R}
\newcommand{\cS}{\@s S}
\newcommand{\cT}{\@s T}
\newcommand{\cU}{\@s U}
\newcommand{\cV}{\@s V}
\newcommand{\cW}{\@s W}
\newcommand{\cX}{\@s X}
\newcommand{\cY}{\@s Y}
\newcommand{\cZ}{\@s Z}
\def\d{{\rm d}}
\def\i{{\rm i}}
\newcommand{\@bm}[1]{\ensuremath{\mathbf #1}}
\newcommand{\bma}{\@bm a}\newcommand{\bmA}{\@bm A}
\newcommand{\bmb}{\@bm b}\newcommand{\bmB}{\@bm B}
\newcommand{\bmc}{\@bm c}\newcommand{\bmC}{\@bm C}
\newcommand{\bmd}{\@bm d}\newcommand{\bmD}{\@bm D}
\newcommand{\bme}{\@bm e}
\newcommand{\bmf}{\@bm f}\newcommand{\bmF}{\@bm F}
\newcommand{\bmg}{\@bm g}\newcommand{\bmG}{\@bm G}
\newcommand{\bmh}{\@bm h}\newcommand{\bmH}{\@bm H}
\newcommand{\bmi}{\@bm i}\newcommand{\bmI}{\@bm I}
\newcommand{\bmj}{\@bm j}
\newcommand{\bmk}{\@bm k}\newcommand{\bmK}{\@bm K}
\newcommand{\bml}{\@bm l}
\newcommand{\bmm}{\@bm m}\newcommand{\bmM}{\@bm M}
\newcommand{\bmn}{\@bm n}
\newcommand{\bmo}{\@bm o}
\newcommand{\bmp}{\@bm p}
\newcommand{\bmq}{\@bm q}\newcommand{\bmQ}{\@bm Q}
\newcommand{\bmr}{\@bm r}
\newcommand{\bms}{\@bm s}\newcommand{\bmS}{\@bm S}
\newcommand{\bmt}{\@bm t}
\newcommand{\bmu}{\@bm u}\newcommand{\bmU}{\@bm U}
\newcommand{\bmw}{\@bm w}\newcommand{\bmW}{\@bm W}
\newcommand{\bmv}{\@bm v}\newcommand{\bmV}{\@bm V}
\newcommand{\bmx}{\@bm x}\newcommand{\bmX}{\@bm X}\newcommand{\bx}{\@bm x}
\newcommand{\bmy}{\@bm y}\newcommand{\bmY}{\@bm Y}\newcommand{\by}{\@bm y}
\newcommand{\bmz}{\@bm z}\newcommand{\bmZ}{\@bm Z}
\newcommand{\bmzero}{\@bm 0}
\newcommand{\Lin}{\mathop{\rm Lin}}
\newcommand{\@g}[1]{\ensuremath{\mathfrak #1}}
\newcommand{\gA}{\@g A}
\newcommand{\gD}{\@g D}
\newcommand{\gJ}{\@g J}
\newcommand{\gF}{\@g F}
\newcommand{\gM}{\@g M}
\newcommand{\gR}{\@g R}
\newcommand{\diam}{\mathop{\rm diam}}
\newcommand{\commentout}[1]{{}}
\title[Accessibility of convex bodies]{
\large The accessibility of convex bodies \\  
and derandomization  \\
of the hit and run algorithm 
}
\author{Beno\^\i{}t Collins}
\address{B.C: D\'epartement de Math\'ematique et Statistique, Universit\'e d'Ottawa,
585 King Edward, Ottawa, ON, K1N6N5 Canada,
and Department of Mathematics,
Kyoto University, Japan
and
CNRS, Institut Camille Jordan Universit\'e  Lyon 1,
France}
\email{bcollins@uottawa.ca}
\author{Termeh Kousha}
\address{T.K: D\'epartement de Math\'ematique et Statistique, Universit\'e d'Ottawa,
585 King Edward, Ottawa, ON, K1N6N5 Canada}
\email{tkousha@uottawa.ca}
\author{Rafa\l{} Kulik}
\address{R.K.: D\'epartement de Math\'ematique et Statistique, Universit\'e d'Ottawa,
585 King Edward, Ottawa, ON, K1N6N5 Canada}
\email{rkulik@uottawa.ca}
\author{Tomasz Szarek}
\address{T.S.: Department of Mathematics, University of Gda{\'n}sk,
ul. Wita Stwosza 57, 80-952 Gda{\'n}sk, Poland}
\email{szarek@intertele.pl}
\author{Karol {\.Z}yczkowski}
\address{K.{\.Z}: Institute of Physics, Jagiellonian University, Cracow,
                                and
  Center for Theoretical Physics, Polish Academy of Sciences, Warsaw}
\email{ karol@tatry.if.uj.edu.pl}
\begin{document}

\date\today

\maketitle

\begin{abstract}
We introduce the concept of accessibility and prove that any convex body $X$
 in $\mathbb R^d$ is accessible with relevant constants depending on $d$ only. 
This property leads to a new algorithm which may be considered as a natural 
derandomization of the {\sl hit and run} algorithm applied to 
generate a sequence of random points covering $X$ uniformly.
We prove stability of the Markov chain generated by the proposed algorithm
and provide its rate of convergence.
\end{abstract}

\section{Introduction}

We are concerned with the accessibility of convex bodies in $\mathbb R^d$. Generally speaking, a convex set $X$ is accessible if there exist two integers $k$ and $l$ and a set of vectors $\{{e}_1,\ldots, {e}_l\}$ such that we may get from an arbitrary point $x\in X$ to a fixed point $y\in X$ in $k$ steps walking in the directions given by the vectors and not 
leaving 
the set $X$. Our main result - Theorem \ref{Main_THM} presented in Section 2 -- says that any convex body in $\mathbb R^d$ is accessible. We also provide universal constants $k, l$, which depend only on the dimension $d$.

In Sections 3 and 4  we propose to consider an algorithm which may be treated as a natural derandomization of the {\sl hit and run} algorithm (see \cite{DFK, Lovasz, LV1, LV2, Vempala}). Recall that the latter algorithm picks a random point along a random direction through the current point. Random directions are chosen uniformly on the sphere  $S^{d-1} \subset \mathbb R^d$.
 In the algorithm proposed in this work we also select the line through the current point randomly
 but their directions are restricted to the set of given vectors  $\{{e}_j \}_{j=1}^l$. 
If the vectors are such that the set is accessible the algorithm is exponentially convergent to the Lebesgue distribution. We provide its rate of convergence. There is no surprise that the algorithm is generically slower than the hit and run algorithm but the fact that it is also convergent at an exponential rate seems to be interesting 
per se.

Finally, in Section 5 we apply the algorithm to concrete convex bodies appearing in quantum information theory and statistical physics (quantum states, stochastic matrices and bistochastic matrices).
\section{Accessibility of convex bodies}

To characterize the set $X$ and the set of vectors ${\mathbf e}=\{e_1,\dots e_l\}$, $l\ge d$,
 we will need the notion of
accessibility in $k$ steps illustrated in Fig. \ref{fig:acc1}.

{\bf Definition}.
A compact set $X\subset\mathbb R^d$ is called {\bf accessible} in $k$  steps
with respect to $l$ vectors of ${\mathbf e}$,
if there exists $x_* \in {\rm int} X$ such that
from any point $x\in X$ one can reach $x_*$
in not more than $k$ moves along the basis vectors.
Thus there exist some sets $\{i_1,\ldots, i_k\}\subset\{1,\ldots, l\}$
and $\{\lambda_1,\ldots, \lambda_k\}$, $\lambda_1,\ldots,\lambda_k\in\mathbb R$ such that
$$
x+\sum_{j=1}^m \lambda_j \mathbf e_{i_j}\in {\rm int} X\qquad\text{for any $m\le k$}\quad\text{and}\quad
 x+\sum_{j=1}^k \lambda_j \mathbf e_{i_j}=x_*.
$$

\begin{figure}[ht]
\centering
\scalebox{0.85}{\includegraphics{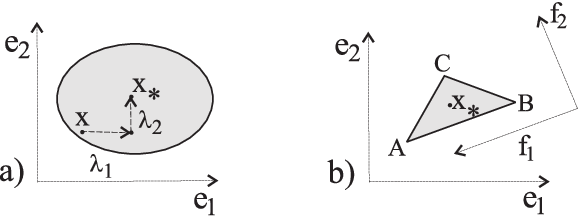}}
\caption{Examples of accessibility in $2D$:
a) an ellipse is $2$--accessible with respect to any
orthogonal basis: any point $x$ can be transformed into
a selected point $x_*$ in two moves along the basis vectors;
b) the triangle $ABC$ is not accessible with respect to basis $(e_1,e_2)$, 
as starting from the corner $A$ one cannot move
along the basis vectors,
but this triangle is $2$--accessible with respect to the basis $(f_1,f_2)$.}
 \label{fig:acc1}
\end{figure}

If a set $X$ is accessible in $k$ steps with respect to a fixed
set of vectors of ${\mathbf e}$ we will briefly call it {\sl $k$--accessible}.
We are aimed at proving that any convex body in $\mathbb R^d$ is accessible with proper $k$ and $l$ depending on $d$ only.

At the very beginning we prove the following lemma.

\begin{lemma} Assume that a convex body $X\subset\mathbb R^d$ is such that $\diam X\le R$ and there is a $r$--ball $B$ such that
$
B\subset X.
$
Then $X$ is  accessible in $d+1$ steps with respect to some $\{{e}_1,\ldots, {e}_l\}$, where $l\le (1+2R/r)^d+d$.
\end{lemma}

\begin{proof} Let $x_*$ denote the center of the ball $B$. Let $B(R)$ denote the ball centred at $0$ with diameter $R$ and let $S$ be its sphere, i.e.
$$
S=\{x\in\mathbb R^d: \|x\|=R\}.
$$
Choose an open cover $\mathcal U=\{ U_1,\ldots, U_q\}$ of the sphere $S$ such that $\diam U_i\le r/2.$  We may assume that $q\le (1+2R/r)^d$, by Lemma 4.10 in \cite{GP}.
Choose $x_i\in U_i$ and let ${e}_i=[0, x_i]$ for $i=1,\ldots, q$. Finally set ${e}_i=e_{i-q}$ for $i\in\{q+1,\ldots, q+d\}$, where $\{e_1,\ldots, e_d\}$ is the standard basis in $\mathbb R^d$. 

Consider now the sets of vectors $\vec{U}_i=\{[0, x]: x\in U_i\}$ for $i=1,\ldots, q$. 
Choose 
$x\in X$. The set
$$
\{x+\|x_*-x\|R^{-1}\cdot\vec{U}_i\}_{i=1}^q
$$
is a cover of the sphere $\tilde S=\{y\in\mathbb R^d: \|y-x\|=\|x_*-x\|\}$. Obviously $x_*\in \tilde S$. Since $\|x_*-x\|\le R$, the diameter of all sets of the cover is less than $r/2$. Thus there is $i\in\{1,\ldots, q\}$ such that
$$
x+\|x_*-x\|R^{-1}\cdot\vec{U}_i\subset B(x_*, r)\cap \tilde S
$$
and consequently
$$
x+\|x_*-x\|R^{-1}\cdot {e}_i\in B(x_*, r).
$$
Now after at most $d$--steps along the natural basis $\{e_1,\ldots, e_d\}$ we reach the point $x_*$. The proof is complete.
\end{proof}

A map $\varphi: \mathbb R^d\to\mathbb R^d$ is called an affine map if it is of the form
$$
\varphi (x)=Ax+b \qquad\text{for $x\in\mathbb R^d$},
$$
where $A:\mathbb R^d\to\mathbb R^d$ is linear and $b\in\mathbb R^d$ is a constant vector. The affine map $\varphi$ is nonsingular iff ${\rm det}\, A\neq 0$. An {\bf ellipsoid} in $\mathbb R^d$ is the image of a unit ball $B\subset\mathbb R^d$ under some nonsingular affine map.

To proceed we shall make
use of the following famous result of F. John (see \cite{FJ}). 

\begin{theorem}\label{Thm1.10.02.15}

 Let $X\subset \mathbb R^d$
be a convex body. Then there is an ellipsoid $E$ (called the
John ellipsoid which turns out to be the ellipsoid of maximal volume
contained in $X$) so that if $c$ is the center of $E$ then the inclusions
$$
E\subset X\subset c+d(E-c)
$$
hold.

\end{theorem}
The following lemma is obvious.

\begin{lemma}\label{Lem1.10.02.15}
If a convex body $X\subset \mathbb R^d$ is accessible in $k$ steps with respect to $\{{e}_1,\ldots, {e}_l\}$, then for any nonsingular affine map $\varphi: \mathbb R^d\to\mathbb R^d$ the set $\varphi (X)$ is also $k$--accessible with respect to $\{\varphi({e}_1),\ldots, \varphi({e}_l)\}$.
\end{lemma}
\begin{proof} Let $x_*\in  {\rm int}\,X$ be such that for any $x\in X$ we have
$$
x+\sum_{j=1}^m \lambda_j e_{i_j}\in {\rm int}\, X\qquad\text{for any $m\le k$}\quad\text{and}\quad
 x+\sum_{j=1}^k \lambda_j e_{i_j}=x_*
 $$
 for some
 $\{i_1,\ldots, i_k\}\subset\{1,\ldots, l\}$
and $\{\lambda_1,\ldots, \lambda_k\}$, $\lambda_1,\ldots,\lambda_k\in\mathbb R$.

Let $\tilde x_*=\varphi (x_*)$. We easily check that for any $y\in\varphi (X)$, $y=\varphi (x)$, we have
$$
y+\sum_{j=1}^m \lambda_j \varphi(e_{i_j})\in \varphi({\rm int}\, X)\qquad\text{for any $m\le k$}\quad\text{and}\quad
 y+\sum_{j=1}^k \lambda_j \varphi(e_{i_j})=\tilde x_*.
 $$
\end{proof}

The main result of this section is the following theorem

\begin{theorem}\label{Main_THM}
Assume that $X\subset\mathbb R^d$
is a convex body. Then $X$ is $d+1$ accessible with respect to some $\{{e}_1,\ldots, {e}_l\}$, where $l\le (2d+1)^{d}+d$.
\end{theorem}

\begin{proof} From Theorem \ref{Thm1.10.02.15} it follows that
$$
\varphi (B)\subset X\subset c+d(\varphi (B)-c)
$$
for some nonsingular affine map $\varphi=A\cdot+b$ and a unit ball $B$. Thus
$$
B\subset \varphi^{-1}(X)\subset \varphi^{-1}(c+d(\varphi (B)-c))=d B +(d-1)A^{-1}(b-c).
$$
Hence $\diam \varphi^{-1}(X)\le 2d$ and a ball $B$ with radius $1$ is contained in $\varphi^{-1}(X)$. From Lemma 2.2 it follows that $\varphi^{-1}(X)$ is  accessible in $d+1$ steps with some $\{{\hat e}_1,\ldots, {\hat e}_l\}$, where $l\le (2d+1)^{d}+d$. Since $\varphi^{-1}$ is a nonsingular affine map, our hypothesis follows from Lemma \ref{Lem1.10.02.15}.
\end{proof}

\section{Convergence theorem, general setup}\label{section:problem}
Let $(X, \mathcal A)$ and $(I, \mathcal B)$ be two measurable spaces and let $\mu$, $\nu$ be two probability measures
 on $X$ and $I$, respectively.

We shall assume that for any $i\in I$ we have a transition 
kernel 
$T_i: X\times \mathcal A\to [0, 1]$,
i.e. $T_i(x, \cdot)$ is a probability measure for any $x\in X$ and for any $A\in\mathcal A$ the
 function $T_i(\cdot, A): X\to [0, 1]$ is measurable. Additionally, we assume that $\mu$ is invariant with respect
 to $T_i$ for any $i\in I$:
$$
\mu (A)=\int_XT_i(x, A)\mu (dx)\qquad\text{for all $A\in\mathcal A$.}
$$
Now, if we assume that for any $A\in\mathcal A$ the function $T_{\cdot}(\cdot, A): I\times X\to[0, 1]$ is
 $\mathcal B\otimes_{\sigma}\mathcal A$--measurable, then it follows from the Fubini theorem that the measure 
 $\mu$ is invariant with respect to the operator $Q$ of the form
$$
Q\hat\mu(\cdot)=\int_X\int_I T_i(x,\cdot)\hat{\mu}(dx)\nu(di).
$$

By $\mathcal M$ and $\mathcal M_1$ we shall denote the set of all Borel measures and all probability Borel measures on $X$, respectively.

By $\|\cdot\|_{TV}$ we denote the {\bf total variation norm}, i.e., if $\hat{\mu}\in\mathcal M-\mathcal M$,
then $\|\mu\|_{TV}:={\hat\mu}^+(X)+\hat{\mu}^-(X)$,
where $\hat{\mu}=\hat{\mu}^+-\hat{\mu}^-$ is the Jordan decomposition of the signed measure $\hat{\mu}$.

We start with the following version of Doeblin's theorem \cite{Doeblin},
which provides sufficient conditions 
for exponential convergence rates of the transition operator $Q$.

\begin{proposition}\label{p1_10.06.13} Assume that there
 exist $\theta\in (0, 1)$, $M\in\mathbb N$ and a
 measure $\nu\in\mathcal M_1$ such that for any measurable set $A$
$$
Q^M(x, A)\ge \theta\nu(A)\qquad\text{for any $x\in X$}.
$$
Then there exists a unique invariant measure $\mu_*\in\mathcal M_1$ such that
\begin{equation}\label{e3}
\|Q^n\mu-\mu_*\|_{TV}\ \le \ C\alpha^n\qquad\text{for all $\mu\in\mathcal M_1$
and $n\ge 1$},
\end{equation}
with the convergence rate $\alpha=(1-\theta)^{1/M}$ and
prefactor $C=2(1-\theta)^{-1}$.

\end{proposition}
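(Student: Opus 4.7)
The plan is to prove this by the classical minorization/coupling argument, reducing first to the case of a one-step contraction of $Q^M$ on the space of probability measures in the total variation norm.

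First I would use the Doeblin minorization hypothesis to decompose the kernel $Q^M$. Since $Q^M(x,A)\geq \theta\nu(A)$ for every $x\in X$ and every $A\in\mathcal A$, the set function
\[
\tilde T(x,A):=\frac{Q^M(x,A)-\theta\nu(A)}{1-\theta}
\]
is a well-defined Markov transition kernel on $X$. Thus for every probability measure $\hat\mu\in\mathcal M_1$ one has the splitting
\[
Q^M\hat\mu \;=\; \theta\,\nu \;+\; (1-\theta)\,\tilde T\hat\mu .
\]

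The key step is then to show that $Q^M$ is a strict contraction of ratio $1-\theta$ on probability measures. Given $\mu_1,\mu_2\in\mathcal M_1$, set $\eta=\mu_1-\mu_2$ with Jordan decomposition $\eta=\eta^+-\eta^-$; since $\mu_1,\mu_2$ are probabilities one has $\eta^+(X)=\eta^-(X)=\tfrac12\|\eta\|_{TV}$. Writing $\mu_i$ as the sum of the common part $\mu_1\wedge\mu_2$ and $\eta^{\pm}$, and applying the above splitting, the $\theta\nu$ terms cancel, leaving
\[
Q^M\mu_1-Q^M\mu_2 \;=\; (1-\theta)\bigl(\tilde T\eta^+-\tilde T\eta^-\bigr).
\]
Since $\tilde T$ sends positive measures to positive measures of the same total mass, taking total variation norms gives $\|Q^M\mu_1-Q^M\mu_2\|_{TV}\leq(1-\theta)\|\mu_1-\mu_2\|_{TV}$. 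I also record the nonexpansiveness of $Q$ itself on signed measures, which is standard from the fact that $\int T_i(x,\cdot)\,\hat\mu(dx)$ preserves positivity and total mass.

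With the contraction established, a standard Banach fixed point argument produces a unique $\mu_*\in\mathcal M_1$ with $Q^M\mu_*=\mu_*$. Applying $Q$ to both sides, $Q^M(Q\mu_*)=Q\mu_*$, so by uniqueness $Q\mu_*=\mu_*$; thus $\mu_*$ is the unique $Q$-invariant probability measure. Finally, to convert the rate for $Q^M$ into a rate for $Q^n$, I write $n=kM+r$ with $0\leq r<M$ and use nonexpansiveness of $Q^r$ followed by $k$-fold contraction of $Q^M$:
\[
\|Q^n\hat\mu-\mu_*\|_{TV} \;\leq\; \|Q^{kM}\hat\mu-\mu_*\|_{TV} \;\leq\; (1-\theta)^k\|\hat\mu-\mu_*\|_{TV} \;\leq\; 2(1-\theta)^k.
\]
Since $k\geq n/M-1$, one has $(1-\theta)^k\leq (1-\theta)^{-1}\alpha^n$ with $\alpha=(1-\theta)^{1/M}$, yielding \eqref{e3} with $C=2(1-\theta)^{-1}$.

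I do not expect genuine obstacles; the only delicate point is the Jordan-decomposition bookkeeping that justifies the contraction estimate (making sure the cancellation of $\theta\nu$ is legitimate, and that $\tilde T$ is indeed a Markov kernel, which uses the minorization inequality for every single $x$). Everything else is routine Banach fixed point theory together with Euclidean division on the exponent.
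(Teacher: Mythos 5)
Your proof is correct: the residual-kernel decomposition $Q^M=\theta\nu+(1-\theta)\tilde T$, the cancellation of the $\theta\nu$ terms via $\eta^+(X)=\eta^-(X)$, the Banach fixed point argument on $(\mathcal M_1,\|\cdot\|_{TV})$, and the Euclidean-division bookkeeping giving $C=2(1-\theta)^{-1}$ and $\alpha=(1-\theta)^{1/M}$ all hold up. The paper itself gives no proof of this proposition (it only cites Doeblin), and your argument is exactly the standard contraction proof of that classical result.
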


Assume now that $X$ is a bounded metric space and let $\varphi :X\to\mathbb R$ be
a Lipschitz function such that $\int_X \varphi(x)\mu (dx)=0$.
 Then we have (see Theorem 17.5.4 in \cite{MT}):

\begin{proposition}\label{prop11.06.13} Let $(\Phi_n)$ be the Markov chain corresponding to the transition operator $Q$.
Under the hypothesis of Proposition \ref{p1_10.06.13}, we have the Central Limit Theorem (CLT),
$$
\frac{\sum_{i=1}^n\varphi (\Phi_i)}{\sqrt{n}}\Longrightarrow W, \quad\text{as $n\to+\infty$,}
$$
where $W$ is a random variable with normal distribution $\mathcal N(0, D)$ for some $D\ge 0$ and the
convergence is understood in law. Moreover, we have the  Law of the Iterated Logarithm (LIL),
$$
\limsup_{n\to +\infty} \frac{\sum_{i=1}^n\varphi (\Phi_i)}{\sqrt{2n\log\log n}}=D
$$
with probability $1$. Of course the above implies that also
$$
\liminf_{t\to +\infty} \frac{\sum_{i=1}^n\varphi (\Phi_i)}{\sqrt{2n\log\log n}}=-D
$$
with probability $1$.
\end{proposition}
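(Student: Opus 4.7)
The plan is to reduce the statement to a direct application of Theorem 17.0.1 / 17.5.4 in Meyn--Tweedie \cite{MT}, whose hypotheses are: (a) the chain is $V$-uniformly ergodic for some drift function $V\ge 1$; (b) the function $\varphi$ satisfies $\varphi^{2}\in L^{V}$; and (c) $\int_X\varphi\,d\mu=0$. The Doeblin-type minorization hypothesis of Proposition \ref{p1_10.06.13} is already of the form used in \cite{MT}, so the main task is simply to check that these three ingredients are in place in our setting.

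First I would verify $V$-uniform ergodicity with $V\equiv 1$. The estimate $Q^{M}(x,A)\ge\theta\nu(A)$ is exactly a uniform (Doeblin) minorization condition on the whole space $X$, so $X$ itself is a small set for the $M$-skeleton of the chain. The conclusion of Proposition \ref{p1_10.06.13}, namely $\|Q^{n}\mu-\mu_{*}\|_{TV}\le C\alpha^{n}$ uniformly in the initial measure $\mu$, is precisely the definition of uniform (i.e.\ $1$-uniform) ergodicity. Hence the chain $(X_{n})$ is $V$-uniformly ergodic with the trivial choice $V\equiv 1$, and the unique invariant measure $\mu_{*}$ supplied by Proposition \ref{p1_10.06.13} plays the role required in \cite{MT}.

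Second I would check the integrability condition on $\varphi$. Since $X$ is a bounded metric space and $\varphi$ is Lipschitz, $\varphi$ is bounded on $X$, so both $\varphi$ and $\varphi^{2}$ are bounded functions. With $V\equiv 1$ this gives $\varphi^{2}\in L^{V}$ trivially, with arbitrarily small norm. The centering assumption $\int_{X}\varphi\,d\mu=0$ is given. (I would also remark that by the uniqueness statement in Proposition \ref{p1_10.06.13} we have $\mu_{*}=\mu$, so the centering is indeed with respect to the stationary measure, which is the form needed in \cite{MT}.)

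With these ingredients in place, Theorem 17.0.1 of \cite{MT} yields the CLT with asymptotic variance
\begin{equation*}
D^{2}\ =\ \E_{\mu}[\varphi(X_{0})^{2}]\ +\ 2\sum_{k=1}^{\infty}\E_{\mu}[\varphi(X_{0})\varphi(X_{k})],
\end{equation*}
the series converging absolutely by the geometric decay of correlations furnished by \REQ{e3}. The companion result (Theorem 17.0.1(iv)/Theorem 17.5.4 of \cite{MT}) provides the LIL, which gives the $\limsup$ statement, and the $\liminf$ statement follows immediately by applying the same LIL to $-\varphi$. The only mildly delicate point, and what I would expect to be the main obstacle, is the bookkeeping needed when $D=0$: in that degenerate case the CLT is read as convergence to the Dirac mass at $0$ and the $\limsup$/$\liminf$ assertions become trivial, so the statement holds in both regimes without modification.
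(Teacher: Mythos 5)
Your proof is correct and follows essentially the same route as the paper, which states this proposition as a direct consequence of Theorem 17.5.4 in \cite{MT}; you simply make explicit the verification that the uniform Doeblin minorization gives uniform ($V\equiv 1$) ergodicity and that a Lipschitz function on a bounded metric space is bounded, hence square-integrable. The handling of the degenerate case $D=0$ and the $\liminf$ via $-\varphi$ are fine.
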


\section{The algorithm and its convergence rate}\label{section:conv_rates}

We first describe the algorithm which is a natural derandomization of the hit and run algorithm. 

\subsection{Description of the algorithm:}
We consider a compact set $X \subset {\mathbb R}^d$. 
In a preliminary step let us choose a set of normed vectors
$e=\{e_1,\dots e_l\}$ in ${\mathbb R}^d$ for $l\ge d$ such that $\Lin \{e_1,\dots e_l\}=\mathbb R^d$.
To generate a sequence
of random points in $X$ repeat the following steps of the
algorithm, illustrated in Fig. \ref{fig:acc2}.

\begin{enumerate}
\item
 Choose an arbitrary starting point $x_0 \in X$,
\item
Draw randomly a vector $e_i$, where the direction $i$
   is chosen with a uniform distribution among $(1,\dots, l)$.
\item
Find boundary points  $x_1^{\rm min}, x_1^{\rm max} \in \partial X$
along the direction $e_i$:
there exist positive numbers $a,b$ such that
$x_1^{\rm min}=x_0-a e_i$ and $ x_1^{\rm max}+be_i$.
\item
Select a point $x_1$  randomly with respect to the
    uniform measure in the interval $[x_1^{\rm min}, x_1^{\rm max}]$.
\item
Repeat the steps (ii)-(iv) to find subsequent random points
  $x_2,x_3,\dots$.
\end{enumerate}

This algorithm is very close to slice sampling. 
The main difference is that slice sampling proceeds recursively. It assumes one
is able to simulate a uniform measure on an slice of codimension $1$ (obtained by intersecting with an $n-1$ dimensional
affine hyper plane).

\begin{figure}[ht]
\centering
\scalebox{0.85}{\includegraphics{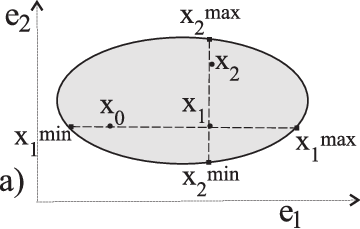}}
\caption{Algorithm to generate random points
uniformly in a compact set $X$: one starts with any interior point $x_0 \in X$,
picks randomly a direction (in this case $e_1$)
finds two boundary points and draws $x_1$
randomly in the interval $[x_1^{\rm min}, x_1^{\rm max}]$.
In the next step one chooses a next direction (here $e_2$),
finds both boundary points and draws  $x_2$
randomly in the interval $[x_2^{\rm min}, x_2^{\rm max}]$.}
 \label{fig:acc2}
\end{figure}

\subsection{Convergence rate with respect to the Lebesgue measure}
Let $X$ be a compact subset of $\mathbb R^d$
with a nonempty interior.
Let a point $x_*$ in the interior of $X$ be given.
We introduce two positive constants, $r$ and $R$,
such that  $B(x_*, r)\subset X\subset B(x_*, R)$,
where $B(x_*,r)$ denotes a closed ball in $\mathbb R^d$.
Let ${\mathbf e}=\{ e_1,\ldots, e_l \}$
such that $\Lin\{e_1,\ldots, e_l \}=\mathbb R^d$ be given.
We consider the Markov chain $\mathbf\Phi=(\Phi_n)_{n\ge 1}$ corresponding to the algorithm
illustrated in Fig. \ref{fig:acc2}, which is described by
 the following transition function
\begin{equation}
\label{e1}
T (x, A )\ = \ \frac{1}{l} \; \sum_{i=1}^l \nu_{x, i}(A),
\end{equation}
where $\nu_{x, i}$ is the measure uniformly
distributed over the set
$\{y\in X: y-x=t e_i\,\,\text{for some $t\in\mathbb R$}\}$.

We are in a position to formulate the following theorem.

\begin{theorem}\label{convergence-estimate}
 If $X\subset\mathbb R^d$
 is $k$--accessible with respect to the basis ${\mathbf e}=\{e_1,\ldots, e_l\}$, 
$x_*\in X$
and  $B(x_*, r)\subset X\subset B(x_*, R)$ for
some $r, R>0$, then the chain $\mathbf\Phi$
corresponding to the transition function $T$
given by (\ref{e1}) satisfies the hypothesis of Proposition 3.1  with

\begin{equation}
\label{e2}
M=k+d\,\,{\rm and}\,\,\,\,\theta=b_d \; l^{-k-d} (r/R)^{k+d},
\end{equation}
where $b_d=\pi^{d/2}/\Gamma(d/2+1)$
denotes the volume of a unit ball in $\mathbb R^d$.

\end{theorem}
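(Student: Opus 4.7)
The plan is to verify the Doeblin minorization $T^{k+d}(x,A)\geq \theta\,\nu(A)$ of Proposition \ref{p1_10.06.13} with $M=k+d$, taking $\nu$ to be the uniform probability measure on the inscribed ball $B(x_*,r)$. Two structural facts drive the argument. First, by $k$-accessibility, for any $x\in X$ there are directions $i_1,\ldots,i_k\in\{1,\ldots,l\}$ and scalars $\lambda_1,\ldots,\lambda_k$ sending $x$ to $x_*$ while each intermediate state lies in $\mathrm{int}\,X$. Second, since $\mathrm{Lin}\{e_1,\ldots,e_l\}=\mathbb R^d$, one can select indices $\sigma_1,\ldots,\sigma_d$ such that $e_{\sigma_1},\ldots,e_{\sigma_d}$ form a basis of $\mathbb R^d$.

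Fix $x\in X$ and condition the Markov chain on choosing the direction sequence $(i_1,\ldots,i_k,\sigma_1,\ldots,\sigma_d)$; this contributes the factor $l^{-(k+d)}$. Conditionally, the state after $k+d$ steps is the affine image
$$y_{k+d}=x+\sum_{j=1}^k t_j e_{i_j}+\sum_{j=1}^d s_j e_{\sigma_j},$$
where each scalar is uniform on the corresponding chord. Because $X\subset B(x_*,R)$ every chord has length at most $2R$, so each one-dimensional density is of order $1/R$, and the joint density on the admissible parameter region is of order $R^{-(k+d)}$. I then restrict integration to a product region $\mathcal R$ of $(t,s)$ centered at $(\lambda_1,\ldots,\lambda_k,0,\ldots,0)$ with side lengths of order $r$. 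The inscribed ball $B(x_*,r)\subset X$ furnishes the slack used to guarantee, by propagation along the accessibility path and direct control of the final $d$ spreading moves, that every intermediate trajectory remains in $X$ throughout $\mathcal R$.

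For the final $d$ parameters I apply the change of variables $s\mapsto V(s)=\sum_j s_j e_{\sigma_j}$; the Jacobian $|\det V|$ is at most $1$ by Hadamard's inequality for unit-norm columns, which only strengthens the lower bound. Combining the factor $r^k$ coming from the volume of the $t$-integrals, the $R^{-(k+d)}$ density factor, and the $d$-dimensional Lebesgue measure coming from the $s$-integration (rewritten through $\mathrm{Leb}(A\cap B(x_*,r))=b_d r^d\,\nu(A)$), I obtain the announced $\theta=b_d l^{-(k+d)}(r/R)^{k+d}$.

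The hard part is the geometric construction of $\mathcal R$: one must propagate the wiggle room provided by $B(x_*,r)$ backwards along the accessibility path and verify that the affine image $\Phi(\mathcal R)$ covers enough of $B(x_*,r)$ (up to a basis-dependent constant that is harmless because $|\det V|\leq 1$) to yield the full $r^{k+d}$ parameter volume. Once this bookkeeping is in place, the minorization follows and Proposition \ref{p1_10.06.13} closes the proof with the stated $M$ and $\theta$.
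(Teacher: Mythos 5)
Your high-level strategy coincides with the paper's: condition on the direction sequence $(i_1,\ldots,i_k,\sigma_1,\ldots,\sigma_d)$ to extract $l^{-(k+d)}$, bound each one--dimensional conditional density below by a constant times $1/R$ using $X\subset B(x_*,R)$, and take $\nu$ to be normalized Lebesgue measure on $B(x_*,r)$. The gap is in the central step. You localize to a product box $\mathcal R$ in parameter space, of side length of order $r$, centred at the nominal accessibility path $(\lambda_1,\ldots,\lambda_k,0,\ldots,0)$, and assert that all intermediate trajectories stay in $X$ over $\mathcal R$. But $k$--accessibility only guarantees that the unperturbed intermediate points $x+\sum_{j\le m}\lambda_j e_{i_j}$ lie in ${\rm int}\,X$; it gives no lower bound, uniform in $x$, on their distance to $\partial X$. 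These points can be forced arbitrarily close to the boundary, so a box of side $\sim r$ need not be admissible, and ``propagating the wiggle room backwards'' cannot produce a uniform constant from the stated hypotheses. A second, related defect: the Doeblin minorization requires the density lower bound at \emph{every} point of the \emph{fixed} ball $B(x_*,r)$. Having $\Phi(\mathcal R)$ cover ``enough of'' $B(x_*,r)$ is not sufficient, since the covered portion depends on $x$ and the bound then fails for sets $A$ in the uncovered part; and whether the image of an admissible $s$--box under $V$ contains all of $B(x_*,r)$ is governed by the smallest singular value of the chosen sub-basis, not by $|\det V|\le 1$, so your remark about the Jacobian does not address it.

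The paper avoids both problems by never localizing: it propagates a lower bound $(1/R)^m$ for the subdensity of $T^m(x,\cdot)$ on the \emph{entire} reachable set $\tilde X_m=\bigcup_{t}(\tilde X_{m-1}+t e_{i_m})\cap X$, with respect to the $t_m$--dimensional Lebesgue measure on the affine span accumulated so far. Accessibility is used only qualitatively, to guarantee $x_*\in\tilde X_k$; the final $d$ moves along a spanning sub-basis then give $\tilde X_{k+d}\supset B(x_*,r)$ with density at least $(1/(Rl))^{k+d}$ everywhere on that ball, and the stated $\theta$ follows after normalizing $r\le 1$. As written, your proposal does not close this step, and the step cannot be closed in the form you propose without strengthening the hypotheses (e.g., demanding a quantitative margin along the accessibility path).
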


\begin{proof} First, there is no restriction in assuming that $r\le 1$. Fix $x\in X$ and
let $\{i_1,\ldots, i_k\}$ and $\{\lambda_1,\ldots, \lambda_k\}$ be
given according to $k$--accessibility of $X$. We are going to derive
 lower bounds for subdensities $f_m$ of $T^m(x,A)$ for $m=1,\ldots, k$
defined on suitable spaces.
Firstly, let $\tilde X_1:=\{x+t_1 e_{i_1}: t_1\in\mathbb R\}\cap X$.
By the fact that $\tilde{X_1}$ contains a non-degenerate interval
its one dimensional  measure  $\mathcal L_1$ is positive. Set $f_1(y)=1/R$ for $y\in\tilde X_1$. Finally, we easily check
 that for any measurable set $A$ the following inequality holds
$$
T (x, A)\ \ge \ (1/l)\int_{ A\cap \tilde X_1} f_1 \d\mathcal L_1.
$$
Define $\tilde X_2=\bigcup_{t_2\in\mathbb R} (\tilde X_{1}+t_2 e_{i_2})\cap X$. 
The set $\tilde X_2\neq 0$ and its $t_2$--dimensional Lebesgue
 measure $\mathcal L_{t_2}$ is positive, where $t_2=\dim \Lin\{e_{i_1}, e_{i_2}\}$,
by $k$--accessibility of $X$. Set $f_2(y):=(1/R)^2$ for $y\in\tilde X_2$ and observe that
$$
T^2 (x, A )\ \ge \ (1/l)^2\int_{A \cap \tilde X_2} f_2 \d\mathcal L_{t_2}.
$$

By induction we define the sets $\tilde X_3,\ldots, \tilde X_m$.
If we have done it for $m<k$, we may do it for $m+1$ as well. Namely, we set
$\tilde X_m:=\bigcup_{t_m\in\mathbb R} (\tilde X_{m-1}+t_m e_{i_m})\cap X$.
Then the set $\tilde X_m$ has positive $t_m$--dimensional Lebesgue measure,
where $t_m=\dim\Lin\{e_{i_1},\ldots, e_{i_m}\}$. Moreover,
$$
T^m (x, A ) \ \ge \ (1/l)^m\int_{ A \cap \tilde X_m} f_m \d\mathcal L_{t_m},
$$
where $f_m(y):=(1/R)^m$ for $y\in \tilde X_m$.
 In this way we obtain the constant $t_k=\dim\Lin\{e_{i_1},\ldots, e_{i_k}\}$ and the set $\tilde X_k\ni x_0$.
Set
$$
e_{i_{k+1}}=e_{q_1},\ldots, e_{i_{k+d}}= e_{q_d},
$$
where $q_1,\ldots, q_d\in\{1,\ldots, l\}$ are such that $\Lin\{e_{q_1},\ldots, e_{q_d}\}=\mathbb R^d$.

Now we repeat the procedure $d$ more times.
Then $\tilde X_{k+d}\supset B(x_0, r)$ and $t_{k+d}=d$ and
we finally obtain
$$
T^{k+d} (x, A)\ \ge \ (1/Rl)^{k+d}\mathcal L_{d}( A \cap B(x_0, r)).
$$

Since, $B(x_0, r)\subset\tilde X_{k+d}$ we obtain
$$
T^{k+d} (x, A)\ \ge  \ (1/Rl)^{k+d}b_d r^d\nu(A) \ \ge \ b_d l^{-k-d}(r/R)^{k+d}\nu(A),
$$
where $\nu(A)=\mathcal L_{d}(A \cap B(x_0, r))(\mathcal L_{d}(B(x_0, r)))^{-1}$.

This completes the proof.
\end{proof}

From  Proposition \ref{prop11.06.13} it follows that

\begin{corollary}
Let $(\Phi_n)_{n\ge 1}$ be the Markov chain corresponding to the transition function $T$ and let $\varphi$ be an arbitrary
Lipschitz function on $X$ such that $\int_X\varphi d\mathcal L_d=0$. Then $(\varphi(\Phi_n))_{n\ge 1}$ satisfies the  CLT and LIL.

\end{corollary}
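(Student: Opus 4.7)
The plan is to read this Corollary as a direct corollary of Proposition 2.2 (\ref{prop11.06.13}), once the three hypotheses of that proposition are verified for the chain $\mathbf\Phi$ defined by the transition function $T$ of (\ref{e1}): that $X$ is a bounded metric space, that the transition operator $Q=T$ satisfies the hypothesis of Proposition 2.1, and that $\varphi$ is Lipschitz with vanishing mean against the \emph{invariant probability measure} $\mu_*$ of Proposition 2.1.

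First I would dispose of the easy items. Boundedness of $X$ is free: $X$ is a compact subset of $\mathbb R^d$ sitting in the ball $B(x_*,R)$, so with the induced Euclidean metric it is a bounded metric space. The Doeblin-type minorization required by Proposition 2.1 has just been proved in Theorem \ref{convergence-estimate}, with explicit $M=k+d$ and $\theta=b_d l^{-k-d}(r/R)^{k+d}$, so $T$ has a unique invariant probability measure $\mu_*$ and satisfies (\ref{e3}). Lipschitz regularity of $\varphi$ is assumed.

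The one point that genuinely needs a sentence of justification is the identification of the invariant measure $\mu_*$ with the normalized Lebesgue measure on $X$. For this I would argue as follows. For each $i$, the measure $\nu_{x,i}$ is by construction the uniform (one-dimensional Lebesgue) measure on the chord $\{y\in X : y-x\in\mathbb R e_i\}$. A standard Fubini argument along the family of affine lines parallel to $e_i$ shows that the normalized restriction $\mathcal L_d|_X / \mathcal L_d(X)$ is invariant under each of the transition kernels $x\mapsto \nu_{x,i}(\cdot)$, and hence under their convex combination $T$ of (\ref{e1}). By uniqueness of the invariant measure supplied by Proposition \ref{p1_10.06.13}, this forces $\mu_* = \mathcal L_d|_X/\mathcal L_d(X)$. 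Consequently the normalization $\int_X\varphi\,d\mathcal L_d=0$ in the statement is equivalent to $\int_X\varphi\,d\mu_*=0$, which is the centering condition required by Proposition \ref{prop11.06.13}.

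With all hypotheses verified, Proposition \ref{prop11.06.13} applies verbatim to the chain $(X_n)$ and the function $\varphi$, delivering both the CLT and the LIL for the partial sums $\sum_{i=1}^n\varphi(X_i)$. The main (and essentially only) obstacle is the invariance check in the previous paragraph; everything else is bookkeeping. If one wished to be fully rigorous about that step, the cleanest route is to write, for a bounded measurable test function $g$,
\[
\int_X (Tg)(x)\,\mathcal L_d(dx)=\frac{1}{l}\sum_{i=1}^l\int_X\!\int_{\mathbb R} g(x+te_i)\,\frac{\mathbf 1_{x+te_i\in X}}{\ell_i(x)}\,dt\,\mathcal L_d(dx),
\]
where $\ell_i(x)$ is the length of the chord through $x$ in direction $e_i$, and then apply Fubini slice-by-slice along the $(d-1)$-dimensional hyperplane orthogonal to $e_i$ to recover $\int_X g\,d\mathcal L_d$.
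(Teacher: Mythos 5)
Your proposal is correct and follows the same route as the paper, which derives the corollary directly from Proposition \ref{prop11.06.13} once Theorem \ref{convergence-estimate} supplies the Doeblin minorization. The only addition is your explicit Fubini verification that the normalized Lebesgue measure on $X$ is invariant under each kernel $\nu_{\cdot,i}$ and hence equals $\mu_*$; the paper leaves this implicit, since invariance of the target measure under each $T_i$ is already built into the general setup of Section \ref{section:problem}, so your check is a welcome but not divergent piece of bookkeeping.
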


\subsection{Case where the density is not uniform}

In this section we consider the case where the measure on a compact subset $K$ of $\mathbb{R}^n$ is not
just the restriction of the Lebesgue measure, but a more general measure.
We assume that the measure has a continuous strictly positive density with respect to the Lebesgue measure, and
denote this density by $f$.

In this case, we consider the subset $\tilde K$ of $\mathbb{R}^n\times \mathbb{R}^+$ given by
$$\tilde K = \{(x,y): x\in K,   0<y\leq f(x)\}.$$
This is clearly a compact set, and we can prove readily the following Proposition, 
from which we derive our estimates.

\begin{proposition}
Assume that we are under the hypotheses of Theorem \ref{convergence-estimate}
for the set $\tilde K$.
Then the law of the first component $x$ of $(x,y)$ converges towards the probability measure
that we want to simulate $f(x)dx$ at the speed given by Theorem \ref{convergence-estimate}.
\end{proposition}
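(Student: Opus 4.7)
My plan is to apply Theorem \ref{convergence-estimate} directly to the augmented set $\tilde K \subset \mathbb{R}^{d+1}$ and then transport the conclusion back to $K$ by projection. First, a Fubini computation identifies the target measure on $K$ as the push-forward of the uniform measure on $\tilde K$: for any Borel set $A\subset K$,
\[
\mathcal L_{d+1}\bigl(\tilde K \cap (A\times \mathbb{R}_+)\bigr) \ = \ \int_A f(x)\, dx,
\]
so if $\mu_*$ denotes the normalized Lebesgue measure on $\tilde K$ and $\pi(x,y)=x$ the projection, then $\pi_*\mu_*$ coincides with the density $f(x)\,dx$ (after the harmless normalization by $\int_K f$, which equals $1$ in the probability case).

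Second, the hypotheses of Theorem \ref{convergence-estimate} applied to $\tilde K$ give, via Proposition \ref{p1_10.06.13}, exponential convergence of the Markov chain $(X_n, Y_n)$ on $\tilde K$ to $\mu_*$ in total variation:
\[
\|\mathcal L(X_n, Y_n) - \mu_*\|_{TV} \ \le \ C\alpha^n,
\]
with $\alpha = (1-\theta)^{1/M}$ and $C=2(1-\theta)^{-1}$ as in \REQ{e3}.

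Third, I invoke the elementary fact that the push-forward operator under any measurable map is a contraction for the total variation norm: $\|\pi_*\mu_1 - \pi_*\mu_2\|_{TV} \le \|\mu_1 - \mu_2\|_{TV}$. Applying this with $\pi(x,y)=x$ yields
\[
\bigl\|\mathcal L(X_n) - f(x)\,dx\bigr\|_{TV} \ = \ \|\pi_*\mathcal L(X_n, Y_n) - \pi_*\mu_*\|_{TV} \ \le \ C\alpha^n,
\]
which is exactly the convergence rate claimed.

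The argument is essentially a routine combination of Fubini's theorem with the contractivity of push-forward under projection, so no genuine obstacle arises beyond bookkeeping. The one point worth flagging is that to run the algorithm on $\tilde K$ one must choose a basis of $\mathbb R^{d+1}$ for which $\tilde K$ is genuinely $k$-accessible and verify the inscribed/outscribed ball inequalities; this hides a geometric hypothesis on $f$ (some regularity and positivity near the boundary of $K$ is needed so that $\tilde K$ has nonempty interior and finite $R/r$), but the proposition sidesteps this issue by assuming that these hypotheses hold.
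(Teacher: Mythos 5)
Your argument is correct and is essentially the paper's own proof, which simply says the result is ``a direct application of Fubini's theorem''; you have filled in the two implicit steps (Fubini identifies the projection of the uniform measure on $\tilde K$ as $f(x)\,dx$, and push-forward under the projection is a contraction in total variation), both of which are exactly what the paper intends. Your closing caveat about needing to verify $k$-accessibility and the ball inclusions for $\tilde K$ is a fair observation, but as you note it is absorbed into the proposition's hypotheses.
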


\begin{proof}
This is a direct application of Fubini's theorem.
\end{proof}

\section{Applications to quantum information theory and statistical physics}
\label{section:application}

In several problems of statistical and quantum physics
one works with states defined on an $N$ dimensional space.
The corresponding sets of states form compact convex sets
in $\mathbb R^d$, were $d=d(N)$. The same is true for
the set of linear discrete transformations acting on them.
In the classical case one uses {\sl stochastic} and
 {\sl bistochastic matrices},
which send the set of probability simplex into itself,
while in the quantum case one deals with {\sl quantum operations},
formally defined as completely positive, trace preserving maps.

In several concrete applications, related e.g. to the theory of
quantum information,
 one considers often various convex subsets of the above sets,
and is interested to analyze properties of their typical
elements. To this end it is important to develop an
efficient algorithm to generate a sample of random points
according to the flat measure in a given convex set $X$.
In some cases there exist such algorithms dedicated
to a given set: For instance, procedures to generate
random quantum states were studied in \cite{Br96,Ha98,ZS01},
while other contributions deal with
random subnormalized states \cite{CSZ07},
random bistochastic matrices \cite{CSBZ09}
and random quantum operations \cite {BCSZ09}.

However, the procedures mentioned above
are dedicated to a particular problem and
cannot be easily adopted to other convex sets.
On the other hand, the sampling algorithm developed in this work
is universal, as it allows one to generate
sequences of random points distributed uniformly
in an arbitrary convex body $X \subset {\mathbb R}^d$.
We are going to characterize $X$ by the
radius $R$ of the minimal outscribed sphere,
the radius $r$ of the maximal inscribed sphere
and by the barycenter $x_*$.

The rest of the paper is devoted to providing explicit estimates of
the convergence rate of the algorithm to generate random points in $X$,
which depends on the dimensionality $d$, the ratio $\mu=r/R$
and on the accessibility parameter $k$.

\subsection{Balls, cubes and simplexes in $\mathbb R^d$.}

For balls and cubes 
in ${\mathbb R}^d$
it is not difficult to generate random points
according to the uniform measure,
so we will not advocate
to use the above algorithm for this purpose.
However it is illuminating to compare estimations
for the parameters determining the convergence rate
according to Eq. (\ref{e2}).

\subsubsection{The Euclidean ball}
 For a unit {\bf ball} $B^d$ both radii coincide, $R=r$,
so their ratio $\mu=r/R$ is equal to unity.
Since for any choice of the basis ${\mathbf e}$
the ball is $d$ accessible,
 estimation (\ref{e2}) gives
$M=2d$ and $\theta=b_d\; d^{-2d}$
where $b_d$ denotes the volume of a unit $d$--ball.
This implies the convergence rate
of our algorithm applied to a $d$--ball,
$$\alpha=(1-\theta)^{1/M}=(1-b_d d^{-2d})^{1/2d}.$$
 
\subsubsection{The unit cube}
For an unit {\bf cube} $C^d$ the inscribed radius $r=1/2$,
and outscribed radius $R=\frac{1}{2}\sqrt{d}$
so the ratio  reads $\mu=r/R=1/\sqrt{d}$. If the basis
$\mathbf e$ is determined by the sides of the cube
then $C^d$ is $d$--accessible. This implies
$M=2d$ and $\theta=b_d d^{-3d}$
and yields the convergence rate
 $$\alpha=(1-b_d d^{-3d})^{1/2d}.$$
 
\subsubsection{The simplex}\label{simplex}
For an $N$--{\bf simplex} $\Delta_{N}$ embedded in
${\mathbb R}^d$ with $d=N-1$ we have $R=\sqrt{(N-1)/N}$
and $r=1/\sqrt{N(N-1)}$ so that $\mu=1/(N-1)=d^{-1}$.
Note that the simplex $\Delta_{N}$
describes the set of classical states --
$N$-point probability distributions.

For a $d$--simplex we can find a basis ${\mathbf e}$
such that the set $\Delta_{d+1}$ is $d$--accessible.
This is the case if the first vector $e_1$ is parallel
to a side of the simplex,
$e_2$ and $e_1$ span the plane parallel to a face
of $\Delta_{d+1}$, while adding an additional vector
$e_n$ spans a hyperplane containing an $n$--face of the
simplex. For this choice of the basis one obtains therefore
$M=2d$ and $\theta=b_d\; d^{-4d}$,
which implies
$$\alpha= (1-b_d d^{-4d})^{1/2d}$$
in our algorithm.

\subsection{Quantum states} The set $\Omega_N$ of density matrices
(Hermitian and positive operators, $\rho^{\dagger}=\rho \ge 0$,
normalized by the trace condition Tr$\rho=1$)
of size $N$ has the dimension $d=N^2-1$.
The radius of the out-sphere, equal to the Hilbert--Schmidt distance
between a pure state diag$(1,0,\dots,0)$
and the maximally mixed state $\rho_*={\mathbb I}/N$,
reads $R=\sqrt{(N-1)/N}$.
The radius of the inscribed sphere given by the distance
between $\rho_*$ and the center of a face,
diag$(0,1,\dots,1)/(N-1)$ is equal to
$r=1/\sqrt{N(N-1)}$ hence
 $\mu=1/(N-1) =1/(\sqrt{d+1}-1) \sim d^{-1/2}$.  

Any quantum state $\rho \in \Omega_N$ can be expressed in terms of the
generalized {\sl Bloch vector} $\tau$,
\begin{equation}
\label{bloch}
\rho= \frac{1}{N}{\mathbb I}+
\sum_{i=1}^d \tau_i \lambda_i .
\end{equation}
Here $\{ \lambda_i\}$ is a set of $d=N^2-1$ traceless
generators of the group $SU(N)$, which form an orthonormal
basis in the Hilbert--Schmidt space of operators of order $N$.
For $N=2$ one usually takes three Pauli matrices $\sigma_i$
while for $N=3$ it is convenient to use eight Gell-Mann matrices \cite{Schiff}. Since the state $\rho$ is hermitian,
the coordinates of the corresponding
Bloch vector, $\tau_i = {\rm Tr} \lambda_i \rho$, are real.
Thus the Bloch vector $\tau=(\tau_1,\dots, \tau_d)$
belongs to ${\mathbb R}^d$ and the conditions
for $\tau$ to guarantee positivity of $\rho$ are known \cite{Ki03}.
Setting some coefficients of $\tau$ to zero corresponds to a
projection onto a subspace and does not spoil positivity of $\rho$.

Thus the $d$--dimensional convex set $\Omega_N$ of quantum states
is $d$--accessible with respect to the Bloch basis
$(\lambda_1,\dots, \lambda_N)$.
For this choice of the basis one obtains therefore
$M=2d=2(N^2-1)$ and $\theta \sim b_d\; d^{-3d}$,
which implies $\alpha \sim (1-b_d d^{-3d})^{1/2d}$.
Interestingly, from the point of view of the
estimation for the convergence rate
the set $\Omega_N$ of mixed quantum states,
behaves analogously as a $d$--cube $C^d$ of dimension $d=N^2-1$.

\subsection{Stochastic matrices.} Stochastic matrices of order $N$ 
form a convex body of dimensionality $d=N(N-1)$
and play a role of classical maps, which send the simplex of $N$--point
probability vectors into itself.
Each column of a stochastic matrix $T$ consists of non-negative numbers which sum to unity,
so it forms an $N$-simplex. Thus the set of stochastic matrices is equivalent to
a Cartesian product of $N$ simplexes $\Delta_N$,
so the estimates follow from section \ref{simplex}, as each column of $T$
can be generated independently.

\subsection{Bistochastic matrices.} The set ${\cal B}_N$
 of bistochastic matrices of size $N$, called
  {\sl Birkhoff polytope} and given by convex hull of all
   permutation matrices has dimensionality $d=(N-1)^2$.
The radius of the  out-sphere of ${\cal B}_N$,
equal to the Hilbert--Schmidt distance between identity
and the uniform matrix $B_*$ containing all entries equal to $1/N$
reads $R=\sqrt{N-1}$.
The radius of the inscribed sphere given by the distance
between $B_*$ and the matrix $B_0=[NB_*-{\mathbb I}]/(N-1)$
 is equal to $r=1/\sqrt{N-1}$, which implies $\mu=1/(N-1)$.

Consider the set $\mathcal C$ of all matrices of the form
\begin{center}

$C_{i \alpha \beta \gamma}=
\left[
\begin{array}{lcccr}
c_{11}=0 & \cdots & 0 & \cdots & c_{1N}=0 \\
\cdots & c_{i \alpha}=-1 & \cdots & c_{i \beta}=1 & \cdots \\
0 & \cdots & 0 & \cdots & 0 \\
\cdots & c_{\gamma \alpha}=1 & \cdots & c_{\gamma \alpha}=-1 & \cdots \\
c_{N1}=0 & \cdots & 0 & \cdots & c_{NN}=0 \\
\end{array}
\right]$

\end{center}
for $i, \alpha, \beta, \gamma\in\{1,\ldots, N\}$.
The set $\mathcal C$ will play the role of $\mathbf e=\{e_1,\ldots, e_l\}$. Obviously $l=N^2(N-1)^2.$
It may be verified that the set ${\cal B}_N$ is $(N-1)^3$--accessible with respect to $\mathbf e$.
To see it assume that $A=[a_{i, j}]_{1\le i, j\le N}$ is a bistochastic matrix with $a_{i,\alpha}>1/N$.
Let $\varepsilon=\min\{a_{i,\alpha}-1/N, 1/N\}$. Observe that since $a_{i,\alpha}>1/N$ and the matrix is bistochastic,
there exists $\beta$ such that $a_{i, \beta}<1/N$. On the other hand, since $a_{i, \beta}<1/N$, there exists
$\gamma$ such that $a_{\gamma, \beta}>1/N$. Taking $A-\varepsilon C_{i \alpha \beta \gamma}$ we obtain the
bistochastic matrix of the form
\begin{center}
$
\left[
\begin{array}{lcccr}
a_{11} & \cdots & \cdots & \cdots & a_{1N} \\
\cdots & a_{i \alpha}-\epsilon=\frac{1}{N} & \cdots & a_{i \beta}+\epsilon>0 & \cdots \\
\cdots & \cdots & \cdots & \cdots & \cdots \\
\cdots & a_{\gamma \alpha}+\epsilon>0 & \cdots & a_{\gamma \alpha}-\epsilon>0 & \cdots \\
a_{N1} & \cdots & \cdots & \cdots & a_{NN} \\
\end{array}
\right].$
\end{center}
Repeating this procedure at most $N-1$ times (possibly with different $\beta$ and $\gamma$) we obtain a matrix
with $a_{i, \alpha}=1/N$. To obtain the matrix with all entries equal to $1/N$ we have to apply this procedure to at
most $(N-1)^2$ entries and hence follows that $k=(N-1)^3$.
Finally, we have $M=N(N-1)^2$ and $\theta=b_{(N-1)^2} (N-1)^{-4N(N-1)^2}$. Hence
$$\alpha=(1-\theta)^{1/M}=(1-b_{(N-1)^2} (N-1)^{-4N(N-1)^2})^{N(N-1)^{-2}}.$$

\section{Concluding Remarks}\label{section:remarks}

The paper was mainly devoted to introducing the concept of accessibility of convex bodies. Our main result
says that any convex body in $\mathbb R^d$ is accessible with some universal constant dependent only on $d$. But in 
the paper we also proposed a universal algorithm to generate random points
inside an arbitrary compact set $X$ in ${\mathbb R}^d$
according to the uniform measure.
Any initial probability measure $\mu$ transformed
by the corresponding Markov operator converges
exponentially to the invariant measure $\mu_*$,
uniformly in $X$. Explicit estimations for the convergence rate are derived
in terms of the ratio $\mu=r/R$ between the radii of the sphere
inscribed inside $X$ and the sphere outscribed on it
and the number $k$ determining the accessibility of the body
with respect to a given orthogonal
basis ${\mathbf e}$ in ${\mathbb R}^d$.

We hope that the algorithm presented here can be used in practice
to generate, for instance, a sample
of random quantum states. In the case of
states of a composed quantum system, one can also
generate a sequence of random states with positive partial
transpose. Sampling random states satisfying a given condition
and analyzing their statistical properties is relevant
in the research on quantum entanglement and
correlations in multi-partite quantum systems.
A standard approach of generating random points
from the entire set of quantum states with respect to the flat measure \cite{ZS01}
and checking a posteriori, whether the partial transpose of the state constructed
is positive, becomes inefficient for large dimensions,
as the relative volume of the set of PPT states becomes
exponentially small \cite{ZHSL98}.

Note that the notion of $k$--accessibility plays a crucial role in
obtaining our estimations. Running the algorithm
for the triangle $ABC$ with the basis ${\bf e}$ (see Fig. \ref{fig:acc1}b),
with respect to which it is not finitely accessible,
one would cover an open subset of the triangle (with two corners excluded).
Although this set has the full measure of the triangle
it is an open set, so the convergence will not be exponential.

In general, for any $k$--accessible set,
the lower parameter $k$ characterizing the accessibility is,
the faster convergence of the Markov chain
to the unique invariant measure $\mu_*$ one obtains.

\medskip

\section*{Acknowledgements}

BC, TK and RK were supported by NSERC discovery grants and Ontario's ERA grants.
The research of TS was supported by Polish Ministry of Science and Higher Education Grant no. IdP2011 000361 and  EC grant RAQUEL. 
He was also supported by the National Science Centre of Poland, grant number DEC-2012/07/B/ST1/03320,
 while KZ acknowledges a support by the NSC grant DEC-2011/02/A/ST1/00119.
The authors wish to thank David McDonald for many discussions.
A part of this work emanates from TK's PhD thesis.
Substantial progress was achieved during several peer visits of the authors to their
co--authors' home institutions  (and of TS and BC to Cambridge's INI). 
The hospitality of the aforementioned institutions is warmly acknowledged. 
It is a pleasure to thank R. Adamczak for constructive remarks
and for drawing our attention to \cite{Vempala}.

\vskip1cm

\end{document}